\newtheorem{theorem}{Theorem}
\newtheorem{proposition}{Proposition}
\newtheorem{lemma}{Lemma}
\theoremstyle{definition}
\newtheorem{question}{Question}
\newtheorem{rmk}{Remark}
\begin{document}
\baselineskip=14.5pt
\title[On the $p$-ranks of the ideal class groups of imaginary quadratic fields]{On the $p$-ranks of the ideal class groups of imaginary quadratic fields} 

\author{Jaitra Chattopadhyay and Anupam Saikia}
\address[Jaitra Chattopadhyay]{Department of Mathematics, Indian Institute of Technology, Guwahati, Guwahati - 781039, Assam, India}
\email[Jaitra Chattopadhyay]{jaitra@iitg.ac.in, chat.jaitra@gmail.com}
\address[Anupam Saikia]{Department of Mathematics, Indian Institute of Technology, Guwahati, Guwahati - 781039, Assam, India}
\email[Anupam Saikia]{a.saikia@iitg.ac.in}

\begin{abstract}
For a prime number $p \geq 5$, we explicitly construct a family of imaginary quadratic fields $K$ with ideal class groups $Cl_{K}$ having $p$-rank ${{\rm{rk}}_{p}(Cl_{K})}$ at least $2$. We also quantitatively prove, under the assumption of the $abc$-conjecture, that for sufficiently large positive real numbers $X$ and any real number $\varepsilon$ with $0 < \varepsilon < \frac{1}{p - 1}$, the number of imaginary quadratic fields $K$ with the absolute value of the discriminant $d_{K}$ $\leq X$ and ${{\rm{rk}}_{p}(Cl_{K})} \geq 2$ is $\gg X^{\frac{1}{p - 1} - \varepsilon}$. This improves the previously known lower bound of $X^{\frac{1}{p} - \varepsilon}$ due to Byeon and the recent bound $X^{\frac{1}{p}}/(\log X)^{2}$ due to Kulkarni and Levin.
\end{abstract}

\renewcommand{\thefootnote}{}

\footnote{2020 \emph{Mathematics Subject Classification}: Primary 11R11, 11R29.}

\footnote{\emph{Key words and phrases}: Quadratic number fields, Class numbers, Ranks of class groups}

\renewcommand{\thefootnote}{\arabic{footnote}}
\setcounter{footnote}{0}

\maketitle

\section{Introduction}

The question of the divisibility of class numbers of quadratic fields has been an object of much study for a long time. Due to the works of Nagell \cite{nagell}, and later by Ankeny and Chowla \cite{AC}, it is known that there exist infinitely many imaginary quadratic fields $K$ with class numbers $h_{K}$ divisible by a given integer $n \geq 2$. Much later, Yamamoto \cite{yama}, Weinberger \cite{berger} and many others proved the analogue for real quadratic fields. There has also been much work done towards the quantitative study (cf. \cite{bilu-luca}, \cite{byeon1}, \cite{byeon2}, \cite{kalyan}, \cite{self-jrms}, \cite{hb1}, \cite{hb2}, \cite{kishi-miyake}, \cite{luca}, \cite{murty}, \cite{sound}, \cite{yu1}). Apart from these, the topic of indivisibility of class numbers (cf. \cite{olivia}, \cite{byeon}, \cite{self-rama}, \cite{kohnen-ono}, \cite{ono}) and simultaneous divisibility and indivisibility of class numbers (cf. \cite{self-acta}, \cite{iizuka}, \cite{komatsu-acta}, \cite{komatsu-ijnt}) received much attention in recent times.

\smallskip

Since the ideal class group $Cl_{K}$ of $K$ is a finite abelian group, it makes sense to consider the {\it $m$-rank} of $Cl_{K}$, for various integers $m \geq 2$. The $m$-rank of $Cl_{K}$, denoted by ${\rm{rk}}_{m}(Cl_{K})$, is defined to be the maximal integer $r \geq 0$ such that $(\mathbb{Z}/m\mathbb{Z})^{r} \subseteq Cl_{K}$. Note that if ${\rm{rk}}_{m}(Cl_{K}) \geq 1$, then it immediately follows that $h_{K}$ is divisible by $m$. Thus the study of the quantity ${\rm{rk}}_{m}(Cl_{K})$ sheds more light on the question of $m$-divisibility of $h_{K}$. In particular, the case $m = 3$ has been extensively studied (cf. \cite{craig1}, \cite{craig2}, \cite{kishi-komatsu}, \cite{levin}, \cite{luca-pacelli}, \cite{yu2}).

\smallskip

In this article, we are concerned about the ranks of the ideal class groups of imaginary quadratic fields. For integers $m \geq 2$, $n \geq 0$ and a large positive real number $X$, let $$N_{m}^{n}(X) = \{K = \mathbb{Q}(\sqrt{-d}) : d > 0, |d_{K}| \leq X \mbox{ and } {\rm{rk}}_{m}(Cl_{K}) \geq n\}$$ and we can ask the following question.

\begin{question}
What is the asymptotic behaviour of $N_{m}^{n}(X)$ as $X \to \infty$?
\end{question} 

In \cite{luca-pacelli}, Luca and Pacelli proved that $N_{3}^{2}(X) \gg X^{\frac{1}{3}}$. Yu \cite{yu2} recently improved that lower bound to $X^{\frac{1}{2} - \varepsilon}$. Also, Levin et al. \cite{levin} proved that $N_{3}^{5}(X) \gg \frac{X^{\frac{1}{30}}}{\log X}$. For an arbitrary integer $m$, the lower bound $N_{m}^{2}(X) \gg X^{\frac{1}{m} - \varepsilon}$ is known due to the work by Byeon \cite{byeon-rama}. Very recently, Kulkarni and Levin \cite{levin-arxiv} improved this and proved that $N_{m}^{2}(X) \gg X^{\frac{1}{m}}/(\log X)^{2}$. In the same paper, Kulkari and Levin improved the lower bound of $N_{3}^{5}(X)$ to $X^{\frac{1}{15}}/(\log X)^{2}$.

\smallskip

In this paper, under the assumption of the validity of the $abc$-conjecture, we slightly improve the lower bound for $N_{p}^{2}(X)$ for any prime number $p \geq 5$, by extending the method used by Yu \cite{yu2}. More precisely, we prove the following theorem.

\begin{theorem}\label{main-th}
Assume that the $abc$-conjecture holds true. Then for sufficiently large positive real numbers $X$, any positive real number $\varepsilon$ with $0 < \varepsilon < \frac{1}{p - 1}$ and any prime number $p \geq 5$, we have $N_{p}^{2}(X) \gg X^{\frac{1}{p - 1} - \varepsilon}$.
\end{theorem}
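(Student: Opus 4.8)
The plan is to combine an explicit construction of a family of fields carrying two independent ideal classes of order $p$ with an $abc$-based squarefree sieve that controls how many genuinely \emph{distinct} such fields have $|d_{K}| \le X$.

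First I would set up the family. Following the idea behind Yu's treatment of the case $p=3$, I would write $d = f(t)$ with $f$ a separable polynomial of degree $p-1$ taking positive values (together, if necessary, with one or two auxiliary parameters running over a bounded range), engineered so that $K_{t} = \mathbb{Q}(\sqrt{-f(t)})$ admits two Nagell-type norm relations
\[
a_{1}^{2} + f(t)\, b_{1}^{2} = c_{1}^{\,p}, \qquad a_{2}^{2} + f(t)\, b_{2}^{2} = c_{2}^{\,p},
\]
with $a_{i},b_{i},c_{i}$ explicit small polynomials in $t$. Writing $\alpha_{i} = a_{i} + b_{i}\sqrt{-f(t)}$, the obvious coprimality conditions make each factorization $(\alpha_{i})(\overline{\alpha_{i}}) = (c_{i})^{p}$ in $\mathcal{O}_{K_{t}}$ force $(\alpha_{i}) = \mathfrak{a}_{i}^{\,p}$ for an ideal $\mathfrak{a}_{i}$, so $[\mathfrak{a}_{i}]$ has order dividing $p$ in $Cl_{K_{t}}$. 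The crucial point for the improved exponent is that $\deg f = p-1$, rather than the $\deg f = p$ implicit in the parametrizations behind the earlier bound $X^{1/p}$.

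Next I would prove $\mathrm{rk}_{p}(Cl_{K_{t}}) \ge 2$, i.e. that $[\mathfrak{a}_{1}]$ and $[\mathfrak{a}_{2}]$ are independent of order exactly $p$. Since $p$ is prime each class has order $1$ or $p$; order $1$ makes $\mathfrak{a}_{i}$ principal, say $\mathfrak{a}_{i} = (\beta_{i})$, and expanding $\alpha_{i} = \pm\beta_{i}^{\,p}$ and comparing the coefficient of $\sqrt{-f(t)}$ contradicts the smallness of $b_{i}$ once $t$ is large. For independence I would suppose $\mathfrak{a}_{1}^{\,i}\mathfrak{a}_{2}^{\,j} = (\gamma)$ with $(i,j)\not\equiv(0,0)\bmod p$; then $\gamma^{p}$ equals a unit times $\alpha_{1}^{i}\alpha_{2}^{j}$, and since the units of $\mathcal{O}_{K_{t}}$ are $\pm 1$ once $|d|$ is large while $N(\mathfrak{a}_{1}),N(\mathfrak{a}_{2})$ stay small, a norm-and-size comparison pins $\gamma$ down and contradicts the explicit shape of $\alpha_{1},\alpha_{2}$. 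This is the step I expect to be the main obstacle: arranging $f$ so that the two relations are simultaneously available yet provably yield independent classes, rather than collapsing to a single class or to a $p$-th power.

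Finally I would carry out the counting. The discriminant bound is cheap, since the fundamental discriminant satisfies $|d_{K_{t}}| \le 4 f(t) \ll t^{p-1}$, so every $t \le c\,X^{1/(p-1)}$ gives a field with $|d_{K}| \le X$, producing $\gg X^{1/(p-1)}$ admissible parameters; the condition $\varepsilon < \frac{1}{p-1}$ is exactly what keeps the target exponent positive. The content is to show that these parameters yield $\gg X^{1/(p-1)-\varepsilon}$ distinct fields, and here I would invoke the $abc$-conjecture through the squarefree-sieve estimate for values of the separable polynomial $f$: for all but $O(T^{\varepsilon})$ of the $t \le T$ the squarefree part of $f(t)$ is $\gg |f(t)|^{1-\varepsilon}$, i.e. the square factor in $f(t) = m u^{2}$ obeys $u \ll T^{\varepsilon}$. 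Two parameters give the same field only when they share the squarefree part $m$, and for fixed $m$ the equation $f(t) = m u^{2}$ with $u \ll T^{\varepsilon}$ has $\ll T^{\varepsilon}$ solutions $t$; hence the number of distinct fields is $\gg T^{1-\varepsilon}$. Taking $T \asymp X^{1/(p-1)}$ and renaming $\varepsilon$ gives $N_{p}^{2}(X) \gg X^{1/(p-1)-\varepsilon}$. A secondary technical point is to discard the bounded exceptional set of $t$ (those where $f$ fails to be squarefree-rich, or where the coprimality and order hypotheses break down) without disturbing the main term.
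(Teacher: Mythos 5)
Your overall strategy (an explicit family carrying two Nagell-type relations, an $abc$-conditional squarefree sieve, and a multiplicity count) runs parallel to the paper's, but the proposal has a genuine gap at its core: the family itself is never constructed. You posit a single-variable polynomial $f(t)$ of degree $p-1$ admitting \emph{two} simultaneous identities $a_i^2 + f(t)b_i^2 = c_i^p$, and you yourself flag this as ``the main obstacle'' --- but that obstacle is precisely the content of the theorem. A degree count already shows the difficulty: if $a_i,b_i,c_i$ are polynomials in $t$ with $b_i$ of low degree, then $c_i^p - a_i^2$ has degree $p\deg c_i$ or $2\deg a_i$ unless there is massive cancellation, and forcing the difference down to degree exactly $p-1$ while retaining two provably independent relations is not something the sketch supplies. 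The paper circumvents this by using a \emph{two}-parameter family with an auxiliary prime: it takes $q \asymp X^{(p-2)/(2p(p-1))}$ and sets $f_q(a,b) = 2(a^p+b^p)q^p - (a-b)^2q^{2p} - g(a,b)^2$ with $g(a,b)=(a^p-b^p)/(a-b)$, so that $X_j = g(a,b)\pm(a-b)q^p$, $Y_1=aq$, $Y_2=bq$ satisfy $X_j^2 - 4Y_j^p = -f_q(a,b)$ simultaneously. The ranges of $a$ and $b$ are tuned so that $f_q(a,b)\asymp X$ while the number of admissible pairs $(a,b)$ is $\asymp q^{2p/(p-2)} \asymp X^{1/(p-1)}$; that is where the exponent $\frac{1}{p-1}$ comes from, not from a degree-$(p-1)$ one-variable polynomial. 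Establishing independence of the two classes then requires a careful case analysis (Cases 1--3 of Proposition \ref{propn-1}), in particular ruling out $X_2=\pm p$ via the size constraints on $a,b,q$; your proposed ``norm-and-size comparison'' does not substitute for this.

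A secondary divergence follows from the first: with a two-variable family one cannot invoke the one-variable $abc$-conditional squarefree sieve. The paper instead needs Poonen's theorem on squarefree values of multivariable polynomials (also conditional on $abc$), preceded by verifications that $f_q$ is squarefree in $\mathbb{Q}[X,Y]$ and has no fixed prime-square divisor (Lemmas \ref{lem-1} and \ref{lemma-2}), and followed by a Cauchy--Schwarz and divisor-bound argument to control how many pairs $(a,b)$ produce the same discriminant. Your multiplicity argument is reasonable in outline for a one-variable family, but it is attached to a family that has not been shown to exist.
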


\section{A family of quadratic fields with ${\rm{rk}}_{p}(Cl_{K}) \geq 2$}

In what follows, $p$ always denotes a fixed prime number $\geq 5$. For two non-zero real-valued function $f$ and $g$, we use the notation $f \ll g$ or $f = O(g)$ to mean that $\frac{|f|}{|g|}$ is bounded and $f \asymp g$ to mean that both $f \ll g$ and $g \ll f$ hold. Also, $\mu$ stands for the M\"{o}bius function and $\sigma_{0}$ denotes the divisor function.

\smallskip

In this section, we prove a proposition that provides a parametric family of imaginary quadratic fields with ideal class groups having $p$-ranks at least $2$. Then by appropriately choosing the parameters, we shall count how many of these fields have the absolute values of their discriminants $\leq X$. Our following proposition is an extension of Lemma 2.1 of Yu \cite{yu2}.

\begin{proposition}\label{propn-1}
For a positive real number $X$, let $q$ be a prime number with $q \asymp X^{\frac{p - 2}{2p(p - 1)}}$. Let $a$ and $b$ be integers with $\frac{1}{2p}q^{\frac{p}{p - 2}} < a,b < (\frac{1}{2p} + \frac{1}{2^{p}p^{p}})q^{\frac{p}{p - 2}}$. Let $$f_{q}(a,b) = 2(a^{p} + b^{p})q^{p} - (a - b)^{2}q^{2p} - g(a,b)^{2},$$ where $g(a,b) = \frac{a^{p} - b^{p}}{a - b}$. If $X$ is sufficiently large and $f_{q}(a,b) \geq 4ab^{\frac{p - 1}{2}}q^{\frac{p + 1}{2}}$, and is square-free, then the ideal class group of the imaginary quadratic field $\mathbb{Q}(\sqrt{-f_{q}(a,b)})$ contains a subgroup isomorphic to $\mathbb{Z}/p\mathbb{Z} \times \mathbb{Z}/p\mathbb{Z}$.
\end{proposition}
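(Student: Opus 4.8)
The plan is to extract from the very definition of $f_{q}(a,b)$ two quadratic identities, and from them two independent ideal classes of order $p$ in $Cl_{K}$, where $K = \mathbb{Q}(\sqrt{-f})$ and $f := f_{q}(a,b)$. Writing $g = g(a,b)$ and $u_{\pm} = g \pm (a-b)q^{p}$, the relation $g\cdot(a-b) = a^{p}-b^{p}$ gives, after a direct expansion,
\[ u_{+}^{2} + f = 4a^{p}q^{p} = 4(aq)^{p}, \qquad u_{-}^{2} + f = 4b^{p}q^{p} = 4(bq)^{p}. \]
Square-freeness of $f$ then pins down the arithmetic setting: reducing the defining relation modulo $4$ gives $f \equiv -u_{\pm}^{2}\pmod 4$, so $f \equiv 3 \pmod 4$ with $u_{\pm}$ odd (as $4 \nmid f$); reducing it modulo $q$ gives $f \equiv -g^{2}\pmod q$, so $q \nmid g$ (else $q^{2}\mid f$) and hence $q$ splits in $K$ with $q \nmid f$; and a similar inspection shows $\gcd(a,b)=1$. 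In particular $\mathcal{O}_{K} = \mathbb{Z}[\tfrac{1+\sqrt{-f}}{2}]$, and $\theta_{1} = \tfrac{u_{+}+\sqrt{-f}}{2}$, $\theta_{2} = \tfrac{u_{-}+\sqrt{-f}}{2}$ lie in $\mathcal{O}_{K}$ with $N(\theta_{1}) = (aq)^{p}$ and $N(\theta_{2}) = (bq)^{p}$.

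From the factorisations $(\theta_{1})\,(\overline{\theta_{1}}) = (aq)^{p}$ and $(\theta_{2})\,(\overline{\theta_{2}}) = (bq)^{p}$ I would deduce that $(\theta_{i}) = \mathfrak{a}_{i}^{p}$ for ideals $\mathfrak{a}_{i}$ with $N(\mathfrak{a}_{1}) = aq$ and $N(\mathfrak{a}_{2}) = bq$. This is a valuation check at each prime $\mathfrak{l}$ (done for $\theta_{1}$, and analogously for $\theta_{2}$ with $bq$ in place of $aq$): a prime dividing both $(\theta_{1})$ and $(\overline{\theta_{1}})$ divides $(\theta_{1} - \overline{\theta_{1}}) = (\sqrt{-f})$, hence lies over a prime dividing $f$ and is ramified (square-freeness), so such $\mathfrak{l}$ is self-conjugate and $v_{\mathfrak{l}}((\theta_{1}))$ equals half of $v_{\mathfrak{l}}((aq)^{p})$, a multiple of $p$; at the remaining (split) primes $(\theta_{1})$ and $(\overline{\theta_{1}})$ are coprime, so $v_{\mathfrak{l}}((\theta_{1})) \in \{0, v_{\mathfrak{l}}((aq)^{p})\}$ is again divisible by $p$. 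Thus $[\mathfrak{a}_{1}]^{p} = [\mathfrak{a}_{2}]^{p} = 1$ in $Cl_{K}$.

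It remains to show that $[\mathfrak{a}_{1}]$ and $[\mathfrak{a}_{2}]$ have order exactly $p$ and are independent; this is where the hypothesis $f \geq 4ab^{(p-1)/2}q^{(p+1)/2}$ is used. The mechanism is that an element $\gamma = \tfrac{x+y\sqrt{-f}}{2} \in \mathcal{O}_{K}$ has $N(\gamma) = \tfrac{x^{2}+fy^{2}}{4}$, so a principal ideal of norm $< f/4$ must come from $y=0$ and therefore have square norm. Since $aq < f/4$ for $X$ large and $aq$ is not a square (as $q \nmid a$ gives $q \parallel aq$), the ideal $\mathfrak{a}_{1}$ is non-principal, so $[\mathfrak{a}_{1}]$ has order $p$; likewise $[\mathfrak{a}_{2}]$. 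For independence it suffices to exclude $[\mathfrak{a}_{1}] = [\mathfrak{a}_{2}]^{k}$ for $0 \le k \le p-1$. Using $[\overline{\mathfrak{a}_{2}}] = [\mathfrak{a}_{2}]^{-1}$, such a relation would place an integral ideal of norm $aq\,(bq)^{\min(k,\,p-k)} \le ab^{(p-1)/2}q^{(p+1)/2} \le f/4$ in the trivial class, forcing $ab^{m}q^{m+1}$ (with $m = \min(k,p-k)$) to be a perfect square. But this is impossible: the exponent of $q$ is odd when $m$ is even, while $ab$ is not a square when $m$ is odd (recall $\gcd(a,b)=1$). Hence $[\mathfrak{a}_{1}] \notin \langle [\mathfrak{a}_{2}]\rangle$, and $\langle [\mathfrak{a}_{1}],[\mathfrak{a}_{2}]\rangle \cong \mathbb{Z}/p\mathbb{Z}\times\mathbb{Z}/p\mathbb{Z}$ sits inside $Cl_{K}$.

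I expect the independence step to be the crux. The order-$p$ claim and the two identities are essentially forced computations, but independence requires producing the minimal-norm representative of a hypothetical relation and certifying that the resulting integer $ab^{m}q^{m+1}$ is never a square; the size hypothesis is calibrated exactly so that this minimal norm stays below $f/4$, where the norm-form argument bites. The only loose ends are the mild non-degeneracy conditions $q \nmid ab$ and ``$ab$ not a perfect square'', which hold for all but a negligible proportion of the admissible pairs $(a,b)$ and hence do not affect the counting in the next section.
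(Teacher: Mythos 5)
Your setup coincides with the paper's: the same two identities $u_{\pm}^{2}+f=4(aq)^{p},\,4(bq)^{p}$, the same ideals $\mathfrak{a}_{1},\mathfrak{a}_{2}$ with $\mathfrak{a}_{i}^{p}$ principal, and the same mechanism that a principal integral ideal of norm at most $f/4$ must have a rational generator, with the hypothesis $f\geq 4ab^{(p-1)/2}q^{(p+1)/2}$ calibrated so that $N(\mathfrak{a}_{1}\overline{\mathfrak{a}_{2}}^{\,m})\leq f/4$ for $m\leq (p-1)/2$. The divergence, and the gap, is in how you rule out the rational generator. You conclude that the norm $ab^{m}q^{m+1}$ would have to be a perfect square and then assert this is impossible; but your two justifications do not hold under the stated hypotheses. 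First, ``$q\nmid a$'' is not forced: square-freeness of $f$ only gives $q\nmid g(a,b)$ and $\gcd(X_{j},Y_{j})=1$, and $q\mid a$ (with $q\nmid b$) is perfectly compatible with $f$ being square-free, so $aq$ can be a perfect square and the $m$ even case of your parity argument collapses. Second, ``$\gcd(a,b)=1$ implies $ab$ is not a square'' is a non sequitur: coprime $a,b$ that are each perfect squares give a square $ab$, so the $m$ odd case (already $m=1$, i.e.\ $[\mathfrak{a}_{1}]=[\mathfrak{a}_{2}]^{\pm 1}$) is not excluded. You flag these as negligible exceptions, which would salvage the counting in Theorem \ref{main-th} after restating the proposition, but it does not prove the proposition as written, and the excluded cases are exactly where the real work lies.

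The paper closes this gap by a different mechanism: if $\mathfrak{a}_{1}\mathfrak{b}^{t}$ were principal with rational generator, then $\theta_{1}\theta_{2}^{\pm t}$ (in your notation) would be rational, so its $\sqrt{-D}$-coefficient $A$ (resp.\ $B$) vanishes. Showing $A,B\neq 0$ is an arithmetic argument, not a parity-of-norm argument: $B=0$ forces $X_{2}\mid X_{1}D^{m}$, hence $X_{2}\mid X_{1}$, hence (using $\ell\mid g(a,b)$ and $\ell\mid (a-b)q^{p}$ for every prime $\ell\mid X_{2}$, plus square-freeness) $X_{2}=\pm p$, which is finally contradicted by the size estimate $|X_{2}|\gg q^{p(p-1)/(p-2)}$ coming from the precise interval $\frac{1}{2p}q^{p/(p-2)}<a,b<(\frac{1}{2p}+\frac{1}{2^{p}p^{p}})q^{p/(p-2)}$. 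That last estimate is the only place the specific constants in the range of $a,b$ are used, and it is entirely absent from your argument. To repair your proof without restricting $(a,b)$, you would need to supply precisely this computation (or an equivalent one showing $\theta_{1}\theta_{2}^{\pm t}\notin\mathbb{Q}$), since the ``norm is a perfect square'' obstruction genuinely fails for infinitely many admissible pairs.
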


\begin{proof}
Let $X_{1} = g(a,b) + (a - b)q^{p}$, $Y_{1} = aq$, $X_{2} = g(a,b) - (a - b)q^{p}$ and $Y_{2} = bq$. Then we see that 
\begin{equation}\label{eq-1}
X_{1}^{2} - 4Y_{1}^{p} = -f_{q}(a,b) = X_{2}^{2} - 4Y_{2}^{p}. 
\end{equation}

For the sake of brevity, let $D = f_{q}(a,b)$. Since $D$ is assumed to be square-free, we get from \eqref{eq-1} that $X_{j}$ is odd and therefore $-D \equiv 1 \pmod {4}$.  Also, for $j = 1,2$, from equation \eqref{eq-1}, we obtain $$\displaystyle\frac{X_{j} + \sqrt{-D}}{2}\cdot \displaystyle\frac{X_{j} - \sqrt{-D}}{2} = Y_{j}^{p} \mbox{ in } \mathcal{O}_{\mathbb{Q}(\sqrt{-D})}.$$ 

Hence $X_{j},Y_{j}^{p} \in \mathcal{I}_{j} :=\left\langle\displaystyle\frac{X_{j} + \sqrt{-D}}{2}, \displaystyle\frac{X_{j} - \sqrt{-D}}{2}\right\rangle$. Since $X_{j}^{2} - 4Y_{j}^{p} = -D$ is assumed to be square-free, we conclude that $\gcd(X_{j},Y_{j}) = 1$. Therefore, $\mathcal{I}_{j} = \mathcal{O}_{K}$ and consequently, $\left\langle\displaystyle\frac{X_{j} + \sqrt{-D}}{2}\right\rangle$ and $\left\langle\displaystyle\frac{X_{j} - \sqrt{-D}}{2}\right\rangle$ are comaximal ideals in $\mathcal{O}_{\mathbb{Q}(\sqrt{-D})}$. Thus there exists an integral ideal $\mathfrak{a}_{j} \subseteq \mathcal{O}_{\mathbb{Q}(\sqrt{-D})}$ such that $$\left\langle\displaystyle\frac{X_{j} + \sqrt{-D}}{2}\right\rangle = \mathfrak{a}_{j}^{p} \mbox{ and } \left\langle\displaystyle\frac{X_{j} - \sqrt{-D}}{2}\right\rangle = \overline{\mathfrak{a}_{j}}^{p}.$$

Now, we prove that the ideal classes $[\mathfrak{a}_{1}]$ and $[\mathfrak{a}_{2}]$ of both $\mathfrak{a}_{1}$ and $\mathfrak{a}_{2}$ generate subgroups of order $p$ inside $Cl_{\mathbb{Q}(\sqrt{-D})}$ and they intersect trivially. For that, it suffices to prove that $\mathfrak{a}_{1}$, $\mathfrak{a}_{2}$ and $\mathfrak{a}_{1}\mathfrak{b}^{t}$ are all non-principal ideals in $\mathcal{O}_{\mathbb{Q}(\sqrt{-D})}$, for all $t = 1,\ldots ,\frac{p - 1}{2}$ and for $\mathfrak{b} = \mathfrak{a}_{2} \mbox{ or } \overline{\mathfrak{a}_{2}}$.

\smallskip

{\bf Case 1.} The ideal $\mathfrak{a}_{j}$ is non-principal. For otherwise, $\mathfrak{a}_{j} = \left\langle\displaystyle\frac{\alpha_{j} + \beta_{j}\sqrt{-D}}{2}\right\rangle$ for some integers $\alpha_{j}$ and $\beta_{j}$ of same parity. Since $\mathfrak{a}_{j}^{p} = \left\langle\displaystyle\frac{X_{j} + \sqrt{-D}}{2}\right\rangle$, it follows that $\beta_{j} \neq 0$. Therefore, $$Y_{j}^{p} = N(\mathfrak{a}_{j}^{p}) = \left(N\left(\displaystyle\frac{\alpha_{j} + \beta_{j}\sqrt{-D}}{2}\right)\right)^{p} = \left(\displaystyle\frac{\alpha_{j}^{2} + \beta_{j}^{2}D}{4}\right)^{p} \geq \left(\displaystyle\frac{1 + D}{4}\right)^{p} > (ab^{\frac{p - 1}{2}}q^{\frac{p + 1}{2}})^{p} > Y_{j}^{p},$$ which is a contradiction. Therefore, $\mathfrak{a}_{j}$ is non-principal for $j = 1,2$.

\smallskip

{\bf Case 2.} For $\mathfrak{b} = \mathfrak{a}_{2}^{t}$ or $\overline{\mathfrak{a}_{2}}^{t}$ and $t$ even, the ideal $\mathfrak{a}_{1}\mathfrak{b}$ is non-principal. Otherwise, $\mathfrak{a}_{1}\mathfrak{b}^{t} = \left\langle\displaystyle\frac{\alpha + \beta\sqrt{-D}}{2} \right\rangle$ for some integers $\alpha$ and $\beta$ of same parity. Then
\begin{eqnarray*}
\mathfrak{a}_{1}^{p}\mathfrak{b}^{pt} &=& \left\langle\displaystyle\frac{X_{1} + \sqrt{-D}}{2}\right\rangle\left\langle\displaystyle\frac{X_{2} \pm \sqrt{-D}}{2}\right\rangle^{t}\\ &=& \left\langle\displaystyle\frac{X_{1} + \sqrt{-D}}{2}\right\rangle \left\langle\displaystyle \frac{\left(\displaystyle\sum_{i = 0}^{t}{t \choose i}X_{2}^{i}(\pm\sqrt{-D})^{t - i}\right)^{t}}{2^{t}}\right\rangle
\end{eqnarray*}
The coefficient of $\sqrt{-D}$ is $$A := X_{1}\left(\displaystyle\sum_{i ~{\rm{odd}}}{t \choose i}X_{2}^{t - i}(\pm\sqrt{-D})^{i - 1}\right) + \displaystyle\sum_{i ~{\rm{even}}}{t \choose i}X_{2}^{t - i}(\pm\sqrt{-D})^{i}.$$
Since $t$ is even, we write $t = 2t_{1}$ for an integer $t_{1}$. Then $A = 0$ implies that $X_{2}$ divides $D^{t_{1}}$, which in turn implies that $\gcd(X_{2},D) > 1$. This, together with the equation $X_{2}^{2} + D = 4Y_{2}^{p}$, implies that $\gcd(X_{2},Y_{2}) > 1$, which is a contradiction to the assumption that $D$ is square-free. Consequently, we get that $A \neq 0$ and therefore, $\beta \neq 0$. Hence $$Y_{1}^{p}Y_{2}^{pt} = N(\mathfrak{a}_{1}^{p}\mathfrak{b}^{pt}) = \left(\displaystyle\frac{\alpha^{2} + D\beta^{2}}{4}\right)^{p} \geq \left(\displaystyle\frac{1 + D}{4}\right)^{p} > \left(ab^{\frac{p - 1}{2}}q^{\frac{p + 1}{2}}\right)^{p},$$ which is a contradiction.

\smallskip

{\bf Case 3.} For $\mathfrak{b} = \mathfrak{a}_{2}^{t}$ or $\overline{\mathfrak{a}_{2}}^{t}$ and $t$ odd, the ideal $\mathfrak{a}_{1}\mathfrak{b}$ is non-principal. Otherwise, $\mathfrak{a}_{1}\mathfrak{b}^{t} = \left\langle\displaystyle\frac{\alpha + \beta\sqrt{-D}}{2} \right\rangle$ for some integers $\alpha$ and $\beta$ of same parity. Then, similar as before, the coefficient of $\sqrt{-D}$ in $\left\langle\displaystyle\frac{X_{1} + \sqrt{-D}}{2}\right\rangle\left\langle\displaystyle\frac{X_{2} \pm \sqrt{-D}}{2}\right\rangle^{t}$ is $$B := X_{1}\left(\displaystyle\sum_{i ~{\rm{odd}}}{t \choose i}X_{2}^{t - i}(\pm \sqrt{-D})^{i - 1}\right) + \displaystyle\sum_{i ~{\rm{even}}}{t \choose i}X_{2}^{t - i}(\pm\sqrt{-D})^{i}.$$ Since $t$ is odd, we write $t$ as $t = 2t_{1} + 1$. Now, if $B = 0$, then we obtain that $X_{2}$ divides $X_{1}D^{m}$. The assumption that $D$ is square-free yields that $\gcd(X_{2},D) = 1$. Thus $X_{2}$ divides $X_{1}$.

\smallskip

Now, we prove that $X_{2}$ is either $p$ or $-p$. For that, let $\ell$ be a prime divisor of $X_{2}$. Then $\ell$ is odd. Moreover, since $X_{2} \mid X_{1}$, we obtain that $X_{2} \mid (X_{1} \pm X_{2})$. That is, $X_{2} \mid g(a,b)$ and $X_{2} \mid (a - b)q^{p}$. Consequently, $\ell \mid g(a,b)$ and $\ell \mid (a - b)q^{p}$. We note that if $\ell = q$, then $\ell^{2} \mid f_{q}(a,b) = D$, a contradiction to the assumption that $D$ is square-free. Consequently, $\ell \neq q$ and hence $a \equiv b \pmod {\ell}$. Again, $\ell \nmid ab$, because $D$ is square-free. Therefore, $0 \equiv g(a,b) \equiv pa^{p - 1} \pmod {\ell}$ implies that $\ell = \pm p$. Thus $X_{2} = \pm p^{k}$, for some integer $k \geq 1$. Therefore, the congruence $a \equiv b \pmod {X_{2}}$ translates to $a \equiv b \pmod {p^{k}}$. Consequently, we have $0 \equiv g(a,b) \equiv pa^{p - 1} \pmod {p^{k}}$. Since $p \nmid a$, this forces $k = 1$. Now, we prove that the equation $X_{2} = \pm p$ has no solution in integers for $a,b \in [\frac{1}{2p}q^{\frac{p}{p - 2}},(\frac{1}{2p} + \frac{1}{2^{p}p^{p}})q^{\frac{p}{p - 2}}] \cap \mathbb{Z}$.

\smallskip

If possible, suppose that $X_{2} = a^{p - 1} + a^{p - 2}b + \ldots + ab^{p - 2} + b^{p - 1} - (a - b)q^{p} = \pm p$ has an integral solution in the aforementioned range. Then we have
\begin{eqnarray*}
p &=& |(a^{p - 1} + a^{p - 2}b + \ldots + ab^{p - 2} + b^{p - 1}) - (a - b)q^{p}|\\ &\geq & |a^{p - 1} + a^{p - 2}b + \ldots + ab^{p - 2} + b^{p - 1}| - |(a - b)q^{p}|\\ &\geq & \left(p\left(\frac{1}{2p}\right)^{p - 1} - \left(\frac{1}{2p}\right)^{p}\right)q^{\frac{p(p - 1)}{p - 2}} = \frac{2p^{2} - 1}{(2p)^{p}}\cdot q^{\frac{p(p - 1)}{p - 2}}.
\end{eqnarray*}
Since $X$ is a large enough positive real number and $q \asymp X^{\frac{p - 2}{2p(p - 1)}}$, we obtain that $$ p \geq \frac{2p^{2} - 1}{(2p)^{p}}\cdot q^{\frac{p(p - 1)}{p - 2}} > p,$$ which is a contradiction.

\smallskip

Hence both $[\mathfrak{a}_{1}]$ and $[\mathfrak{a}_{2}]$ generate a subgroup of order $p$ inside $Cl_{\mathbb{Q}(\sqrt{-D})}$ and they intersect trivially. Therefore, $Cl_{\mathbb{Q}(\sqrt{-D})}$ contains a subgroup isomorphic to $\mathbb{Z}/p\mathbb{Z} \times \mathbb{Z}/p\mathbb{Z}$. This completes the proof of the proposition.
\end{proof}

\section{Square-free values of $f_{q}$}

In this section, first we prove two lemmas about the square-free values of the polynomial $f_{q}$ of Proposition \ref{propn-1}. we prove that $f_{q}$, considered as polynomial in two variables over $\mathbb{Q}$, is square-free in the polynomial ring $\mathbb{Q}[X,Y]$. We also prove that $f_{q}(a,b)$ is not divisible by the square of a fixed prime number for all $(a,b) \in \mathbb{N} \times \mathbb{N}$. This is necessary when we apply Poonen's result (cf. \cite{poonen}) on the square-free values of multivaribale polynomials in our context. 

\begin{lemma}\label{lem-1}
For a prime number $q \geq 5$, the polynomial $$f_{q}(X,Y) = 2(X^{p} + Y^{p})q^{p} - (X - Y)^{2}q^{2p} - (X^{p - 1} + X^{p - 2}Y + \ldots + Y^{p - 1})^{2} \in \mathbb{Z}[X,Y]$$ is square-free as an element of $\mathbb{Q}[X,Y]$. 
\end{lemma}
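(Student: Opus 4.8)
The plan is to use the standard characterization that, over a field of characteristic zero, a nonzero $f \in \mathbb{Q}[X,Y]$ is square-free precisely when no irreducible polynomial divides it with multiplicity $\geq 2$, and to pin down the possible repeated factors via the leading (top-degree) form of $f_q$. First I would record that $g(X,Y) = X^{p-1} + X^{p-2}Y + \cdots + Y^{p-1} = (X^p - Y^p)/(X-Y)$ is homogeneous of degree $p-1$ and irreducible over $\mathbb{Q}$: its dehomogenization $g(X,1) = 1 + X + \cdots + X^{p-1} = \Phi_{p}(X)$ is the $p$-th cyclotomic polynomial, irreducible because $p$ is prime, and since $g(0,Y) = Y^{p-1}$ the variable $Y$ does not divide $g$, so the homogeneous polynomial $g$ is itself irreducible.

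Next I would identify the leading form. Comparing degrees, $2(X^p + Y^p)q^p$ has degree $p$, the term $(X-Y)^2 q^{2p}$ has degree $2$, and $g^2$ has degree $2p-2$; for $p \geq 5$ we have $2p-2 > p > 2$, so the top-degree part of $f_q$ is exactly $-g^2$, homogeneous of degree $2p-2 = 2\deg g$. Now suppose for contradiction that $f_q$ is not square-free, so $P^2 \mid f_q$ for some non-constant irreducible $P$, say $f_q = P^2 Q$. Passing to leading forms (which multiply, since $\mathbb{Q}[X,Y]$ is a domain) gives $\mathrm{lead}(P)^2 \mid g^2$; as $g$ is irreducible and not itself a square, the only perfect-square homogeneous divisors of $g^2$ are scalars and scalar multiples of $g^2$, so $\mathrm{lead}(P) = c_1 g$ for some $c_1 \neq 0$, whence $\deg P = p-1$. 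Since $\deg f_q = 2(p-1) = 2\deg P$, the cofactor $Q$ has degree $0$, and therefore $f_q = c\,P^2$ for a nonzero constant $c \in \mathbb{Q}$.

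It then remains only to rule out $f_q = c\,P^2$, and here I would specialize to the diagonal $Y = X$. Using $g(X,X) = p X^{p-1}$ one computes $f_q(X,X) = 4 q^p X^p - p^2 X^{2p-2} = X^p\bigl(4 q^p - p^2 X^{p-2}\bigr)$, and the second factor takes the nonzero value $4 q^p$ at $X = 0$; hence $X = 0$ is a root of $f_q(X,X)$ of multiplicity exactly $p$. On the other hand, since $\mathrm{lead}(P)(X,X) = c_1 p X^{p-1} \neq 0$, the one-variable polynomial $P(X,X)$ has degree exactly $p-1$ and is in particular not identically zero, so $f_q(X,X) = c\,P(X,X)^2$ is a nonzero constant times a perfect square, and every root of it has even multiplicity. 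Because $p$ is odd, this contradicts the multiplicity $p$ at $X = 0$, and the contradiction shows that $f_q$ is square-free in $\mathbb{Q}[X,Y]$.

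I expect the main obstacle to be the leading-form reduction, namely establishing that the only admissible repeated-factor configuration is the global one $f_q = c\,P^2$; this relies on unique factorization of homogeneous polynomials in $\mathbb{Q}[X,Y]$ together with the irreducibility of $g$. Once that structural step is secured, the diagonal specialization and the parity argument (odd $p$) dispose of the remaining case immediately.
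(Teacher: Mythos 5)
Your proof is correct, but it follows a genuinely different route from the paper's. The paper detects a hypothetical repeated irreducible factor $G$ by noting that it must divide $f_{q}$, $\partial f_{q}/\partial X$ and $\partial f_{q}/\partial Y$, hence also the combination $X\frac{\partial f_{q}}{\partial X} + Y\frac{\partial f_{q}}{\partial Y} - pf_{q}$, which by Euler's identity on the homogeneous pieces is a constant multiple of $g(X,Y)^{2} - (X-Y)^{2}q^{2p}$; factoring that difference of squares and comparing with $f_{q}$ itself, the authors force $G^{2} \mid 4X^{p}q^{p}$ or $G^{2}\mid 4Y^{p}q^{p}$, so $G$ would be $X$ or $Y$, which is absurd. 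You instead exploit the degree gap between the three terms of $f_{q}$: the top form is $-g^{2}$ with $g$ irreducible (via $\Phi_{p}$), so a repeated factor $P$ must have leading form proportional to $g$, forcing $f_{q} = cP^{2}$ globally, which you kill by the odd multiplicity $p$ of the factor $X$ in $f_{q}(X,X) = X^{p}(4q^{p} - p^{2}X^{p-2})$. Both arguments are sound. The paper's derivative method is the standard general-purpose test for repeated factors and does not need irreducibility of $g$ or the degree comparison; your method buys a sharper structural conclusion (any failure of square-freeness would make $f_{q}$ a perfect square up to a constant) and a very clean endgame, but it leans on the specific fact that $\deg g^{2} = 2p-2$ strictly exceeds the degrees of the other two terms and on the irreducibility of the cyclotomic polynomial. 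One small point worth spelling out if you write this up: when you pass from $\mathrm{lead}(P)^{2} \mid g^{2}$ to $\mathrm{lead}(P) = c_{1}g$, the cleanest phrasing is that $g \mid \mathrm{lead}(P)^{2}$ and $g$ irreducible give $g \mid \mathrm{lead}(P)$, and then the degree bound $\deg \mathrm{lead}(P) \le p-1$ pins it down; your statement about ``perfect-square divisors'' is correct but slightly more delicate than what you actually need.
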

\begin{proof}
Assume that $f_{q}(X,Y)$ is not square-free as an element of $\mathbb{Q}[X,Y]$. Then we can write $f_{q}(X,Y) = G(X,Y)^{2}H(X,Y)$ for some polynomials $G$ and $H$. We may also assume that $G$ is irreducible in $\mathbb{Q}[X,Y]$. Differentiating the equation with respect to $X$ and $Y$, we see that $\frac{\partial f_{q}}{\partial X} = G^{2}\frac{\partial H}{\partial X} + 2G(\frac{\partial G}{\partial X})H$ and $\frac{\partial f_{q}}{\partial Y} = G^{2}\frac{\partial H}{\partial Y} + 2G(\frac{\partial G}{\partial Y})H$. That is, $G(X,Y)$ is a common divisor of $f_{q},\frac{\partial f_{q}}{\partial X}$ and $\frac{\partial f_{q}}{\partial Y}$. Therefore, $G(X,Y)$ also divides $X\frac{\partial f_{q}}{\partial X} + Y\frac{\partial f_{q}}{\partial Y} = (p - 2)[(g(X,Y)^{2} - (X - Y)^{2}q^{2p}] + pf_{q}(X,Y)$ and consequently, $G(X,Y)$ divides $(g(X,Y)^{2} - (X - Y)^{2}q^{2p}$. Since $G$ is irreducible, $G(X,Y)$ divides $[g(X,Y) - (X - Y)q^{2p}]$ or $[g(X,Y) + (X - Y)q^{2p}]$. This implies that
\begin{equation}\label{lem1eq1}
G(X,Y)^{2} \mid [g(X,Y)^{2} + (X - Y)^{2}q^{2p} \pm 2(X^{p} - Y^{p})q^{p}].
\end{equation}
Again, 
\begin{equation}\label{lem1eq2}
G(X,Y)^{2} \mid -f_{q}(X,Y) = g(X,Y)^{2} + (X - Y)^{2}q^{2p} - 2(X^{p} + Y^{p})q^{p}.
\end{equation}
From \eqref{lem1eq1} and \eqref{lem1eq2}, it follows that $G(X,Y)^{2} \mid 4X^{p}q^{p}$  or $G(X,Y)^{2} \mid 4Y^{p}q^{p}$. Hence $G$ is a power of $X$ or a power of $Y$. This implies that $f_{q}(X,0) \equiv 0$ or $f_{q}(0,Y) \equiv 0$, which is clearly false. This completes the proof of the lemma. 
\end{proof}

\begin{lemma}\label{lemma-2}
Let $q > p$ be a prime number and let $f_{q}(X,Y)$ be as in Lemma \ref{lem-1}. Then there is no prime number $\ell$ such that $\ell^{2}$ divides $f_{q}(a,b)$ for all $(a,b) \in \mathbb{N} \times \mathbb{N}$.
\end{lemma}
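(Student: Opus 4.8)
The plan is to argue by contradiction: suppose a prime $\ell$ satisfies $\ell^{2} \mid f_{q}(a,b)$ for every $(a,b) \in \mathbb{N}\times\mathbb{N}$. Since $f_{q}$ has integer coefficients, $f_{q}(a,b) \bmod \ell^{2}$ depends only on $a,b$ modulo $\ell^{2}$, so the divisibility holds for all integer pairs, and I am free to evaluate at any residues (including a zero coordinate, after translating by a multiple of $\ell^{2}$). The first step is a perturbation principle: for the one–variable polynomial $F(X) = f_{q}(X,b)$ one has $F(a+\ell t) \equiv F(a) + \ell t\, F'(a) \pmod{\ell^{2}}$, and since both $F(a+\ell t)$ and $F(a)$ are divisible by $\ell^{2}$, letting $t$ vary forces $\ell \mid \partial_{X} f_{q}(a,b)$ for all $(a,b)$; symmetrically $\ell \mid \partial_{Y} f_{q}(a,b)$ for all $(a,b)$. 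Thus the hypothesis propagates to the entire gradient, and it suffices to exhibit a single point at which $\nabla f_{q}$ is nonzero modulo $\ell$.

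Next I would restrict to the line $Y=0$. A direct computation gives $f_{q}(a,0) = -\bigl(a(q^{p} - a^{p-2})\bigr)^{2}$, so $\ell^{2} \mid f_{q}(a,0)$ yields $\ell \mid a(q^{p}-a^{p-2})$ for every $a$. Taking $a=1$ shows $q^{p} \equiv 1 \pmod{\ell}$, and then running $a$ over the units modulo $\ell$ forces $a^{p-2}\equiv 1 \pmod{\ell}$ for all such $a$; choosing $a$ to be a primitive root gives $\ell-1 \mid p-2$. In particular $\ell \le p-1 < q$, which disposes of every prime $\ell \ge p$ and shows $\ell \ne q$. The prime $\ell=2$ is handled separately: since $p$ and $q$ are odd, $f_{q}(1,1)=4q^{p}-p^{2} \equiv 3 \pmod{4}$ is odd, so already $2 \nmid f_{q}(1,1)$, contradicting $\ell \mid f_{q}$ everywhere.

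It remains to rule out the finitely many odd primes $\ell$ with $3 \le \ell \le p-1$, $\ell-1 \mid p-2$, and $q^{p}\equiv 1 \pmod{\ell}$. This is the main obstacle, because on the line $Y=0$ the values are perfect squares and the gradient in fact vanishes at the obvious points, so no naive evaluation works. I would resolve it in two steps. First, a parity remark: if $\ell$ also divided $p-2$, then coprimality of $\ell$ and $\ell-1$ would give $\ell(\ell-1)\mid p-2$, which is impossible because $\ell(\ell-1)$ is even while $p-2$ is odd; hence $\ell \nmid p-2$. Second, I invoke the identity $X\,\partial_{X} f_{q} + Y\,\partial_{Y} f_{q} = (p-2)\bigl[g(X,Y)^{2}-(X-Y)^{2}q^{2p}\bigr] + p\,f_{q}$ established in the proof of Lemma~\ref{lem-1}, and evaluate it at $(X,Y)=(2,1)$. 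Since $\ell-1 \mid p-2$ gives $x^{p-2}\equiv 1 \pmod{\ell}$ for every unit $x$, one has $2^{p}\equiv 4$, $g(2,1)=2^{p}-1\equiv 3$, and $q^{2p}\equiv 1 \pmod{\ell}$, so the right-hand side reduces to $(p-2)(9-1)=8(p-2)$, which is nonzero modulo $\ell$ because $\ell$ is odd and $\ell \nmid p-2$. Hence $2\,\partial_{X} f_{q}(2,1)+\partial_{Y} f_{q}(2,1)\not\equiv 0 \pmod{\ell}$, so $\nabla f_{q}(2,1)\not\equiv(0,0)\pmod{\ell}$, contradicting the perturbation principle. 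This exhausts all primes and proves the lemma.
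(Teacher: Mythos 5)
Your proof is correct, but it takes a genuinely different route from the paper's, and it is considerably longer than it needs to be. The paper evaluates on the diagonal: $f_{q}(a,a)=a^{p}(4q^{p}-p^{2}a^{p-2})$, so $a=1$ gives $4q^{p}\equiv p^{2}\pmod{\ell^{2}}$ (forcing $\ell$ odd, $\ell\neq p,q$), and then taking $a$ to be a primitive root modulo $\ell^{2}$ yields $a^{p-2}\equiv 1\pmod{\ell^{2}}$, hence $\ell(\ell-1)\mid p-2$, which is impossible by parity. The key difference is that the diagonal specialization retains full mod-$\ell^{2}$ information, whereas your restriction to $Y=0$, where $f_{q}(a,0)=-\bigl(a(q^{p}-a^{p-2})\bigr)^{2}$ is a negative square, only yields mod-$\ell$ information and so appears to require the extra gradient/Euler-identity machinery. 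In fact it does not: your second step already gives $\ell-1\mid p-2$, and since $p-2$ is odd while $\ell-1$ is even for every odd prime $\ell$, this forces $\ell=2$, which you have separately excluded via $f_{q}(1,1)=4q^{p}-p^{2}$ being odd. So the set of ``remaining'' primes in your third step is empty and the entire finale at $(2,1)$ is vacuous (though internally valid). Two small remarks: the perturbation principle $F(a+\ell t)\equiv F(a)+\ell tF'(a)\pmod{\ell^{2}}$ is correctly justified and is the standard local criterion, so your approach generalizes more readily than the paper's ad hoc diagonal trick; and the Euler identity you quote from Lemma \ref{lem-1} should read $X\partial_{X}f_{q}+Y\partial_{Y}f_{q}=(p-2)\bigl[(X-Y)^{2}q^{2p}-g(X,Y)^{2}\bigr]+pf_{q}$ --- the bracket is negated relative to what the paper (and you) wrote --- but this sign is immaterial both there and in your argument, since only nonvanishing modulo $\ell$ is used.
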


\begin{proof}
If possible, suppose that $\ell$ is a prime number number such that $\ell^{2}$ divides $f_{q}(a,b)$ for all $(a,b) \in \mathbb{N} \times \mathbb{N}$. In particular, $\ell^{2} \mid f_{q}(a,a) = a^{p}(4q^{p} - p^{2}a^{p - 2})$. For $a = 1$, this yields $4q^{p} \equiv p^{2} \pmod {\ell^{2}}$. Since $p$ is odd, this implies that $\ell$ is also odd. Moreover, $f_{q}(1,1) \equiv 0 \pmod {\ell^{2}}$ implies that $\ell \neq p$ and $\ell \neq q$.

\smallskip

Now, let $a$ be a primitive root modulo $\ell^{2}$. Then the congruence $f_{q}(a,a) \equiv 0 \pmod {\ell^{2}}$ boils down to $4q^{p} \equiv p^{2}a^{p - 2} \pmod {\ell^{2}}$. Using the congruence $4q^{p} \equiv p^{2} \pmod {\ell^{2}}$, we obtain $p^{2} \equiv p^{2}a^{p - 2} \pmod {\ell^{2}}$. Since $\ell \neq p$, this gives $a^{p - 2} \equiv 1 \pmod {\ell^{2}}$. Since $a$ is a primitive root modulo $\ell^{2}$, it immediately follows that $\ell(\ell - 1) \mid (p - 2)$, which is impossible because $p - 2$ is odd and $\ell(\ell - 1)$ is even. This completes the proof of the lemma.
\end{proof}

Our objective is to estimate the quantity $\displaystyle\sum_{\substack{D = f_{q}(a,b) \\ D \leq X}}\mu^{2}(D)$. In other words, we wish to count the number of times the bivariate polynomial $f_{q}$ assumes a square-free value in a given range. In view of this, we quote a result of Poonen \cite{poonen} about the density of square-free values of a multivarible polynomial with integer coefficients, under the assumption of the $abc$- conjecture. Before stating the result, let us recall a few notations, as used in \cite{poonen}, as follows.

\smallskip

For positive real numbers $B_{1},\ldots ,B_{n}$, we $${\rm{Box}} := {\rm{Box}}(B_{1},\ldots ,B_{n}) := \{(a_{1},\ldots ,a_{n}) \in \mathbb{Z}^{n} : 0 < a_{i} \leq B_{i} \mbox{ for all } i\}.$$

For a set $\mathscr{S} \subseteq \mathbb{Z}^{n}$, define $$\overline{\mu_{n}}(\mathscr{S}) := \underset{B_{1},\ldots ,B_{n - 1} \to \infty}{\lim \sup} \underset{B_{n} \to \infty}{\lim \sup} \displaystyle\frac{\#(\mathscr{S} \cap {\rm{Box}})}{\#{\rm{Box}}} \mbox{ and } \underline{\mu_{n}}(\mathscr{S}) := \underset{B_{1},\ldots ,B_{n - 1} \to \infty}{\lim \inf} \underset{B_{n} \to \infty}{\lim \inf} \displaystyle\frac{\#(\mathscr{S} \cap {\rm{Box}})}{\#{\rm{Box}}}.$$ If $\overline{\mu_{n}}(\mathscr{S}) = \underline{\mu_{n}}(\mathscr{S})$, then $\mu_{n}(\mathscr{S})$ is defined to be the common value. Again, $$\overline{\mu}_{\rm{weak}}(\mathscr{S}) := \underset{\sigma}{\max}\underset{B_{\sigma(1) \to \infty}}{\lim \sup}\ldots \underset{B_{\sigma(n)}\to \infty}{\lim \sup} \displaystyle\frac{\#(\mathscr{S} \cap {\rm{Box}})}{\#{\rm{Box}}},$$ where $\sigma$ runs through all the permutations of the set $\{1,\ldots ,n\}$. Similarly, we define $$\underline{\mu}_{\rm{weak}}(\mathscr{S}) := \underset{\sigma}{\max}\underset{B_{\sigma(1) \to \infty}}{\lim \inf}\ldots \underset{B_{\sigma(n)}\to \infty}{\lim \inf} \displaystyle\frac{\#(\mathscr{S} \cap {\rm{Box}})}{\#{\rm{Box}}}.$$ If $\overline{\mu}_{\rm{weak}}(\mathscr{S}) = \underline{\mu}_{\rm{weak}}(\mathscr{S})$, then $\mu_{\rm{weak}}(\mathscr{S})$ is defined to be the common value. 

\smallskip

We quickly recall the $abc$-conjecture that states that if $a,b$ and $c$ are pairwise relatively prime positive integers with $a + b = c$, then for any $\varepsilon > 0$, there exists a constant $C(\varepsilon) > 0$ such that $c < C(\varepsilon)\left(\displaystyle\prod_{\substack{\ell \mid abc \\ \ell ~ {\rm{prime}}}} p\right)^{1 + \varepsilon}$.

\begin{proposition} (\cite{poonen}, Theorem 3.2 and Corollary 3.3)\label{poonen-lem}
Let $F(X_{1},\ldots ,X_{n}) \in \mathbb{Z}[X_{1},\ldots ,X_{n}]$ be a polynomial that is square-free as an element of $\mathbb{Q}[X_{1},\ldots ,X_{n}]$. Suppose that $X_{n}$ appears in $F$. Let $$\mathscr{S}_{F} := \{\underline{x} \in \mathbb{Z}^{n} : F(\underline{x}) \mbox{ is square-free }\}.$$ Assume the validity of the $abc$-conjecture. Then $\mu_{n}(\mathscr{S}_{F}) = \displaystyle\prod_{\ell}\left(1 - \frac{c_{\ell}}{\ell^{2n}}\right)$, where for each prime number $\ell$, the quantity $c_{\ell}$ stands for the number of $\underline{\alpha} \in (\mathbb{Z}/\ell^{2}\mathbb{Z})^{n}$ satisfying $F(\underline{\alpha}) = 0$ in $\mathbb{Z}/\ell^{2}\mathbb{Z}$. Moreover, if $X_{n}$ does not appear in $F$, then $\mu_{\rm{weak}}(\mathscr{S}_{F}) = \displaystyle\prod_{\ell}\left(1 - \frac{c_{\ell}}{\ell^{2n}}\right)$.
\end{proposition}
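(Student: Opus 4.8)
The plan is to reduce the $n$-variable problem to the one-variable case, for which the density of square-free values of a square-free univariate polynomial is known under the $abc$-conjecture by Granville's theorem, and then to recombine the resulting local data by a sieve. First I would use the hypothesis that $X_{n}$ genuinely appears in $F$ to fiber over the remaining coordinates: for each $\underline{a} = (a_{1},\dots ,a_{n-1})$ set $g_{\underline{a}}(X_{n}) := F(a_{1},\dots ,a_{n-1},X_{n}) \in \mathbb{Z}[X_{n}]$. Because $F$ is square-free in $\mathbb{Q}[X_{1},\dots ,X_{n}]$ and $X_{n}$ occurs, Gauss's lemma shows that the factors of $F$ involving $X_{n}$ remain non-associate over the function field $\mathbb{Q}(X_{1},\dots ,X_{n-1})$; hence $\mathrm{disc}_{X_{n}}(F)$ is a \emph{nonzero} polynomial in $a_{1},\dots ,a_{n-1}$, and $g_{\underline{a}}$ is a nonconstant square-free polynomial for every $\underline{a}$ outside a proper Zariski-closed subset of $\mathbb{A}^{n-1}$.

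For such a \emph{good} fiber, Granville's result gives, as $B_{n} \to \infty$,
$$\frac{1}{B_{n}}\,\#\{x_{n} \in (0,B_{n}] : g_{\underline{a}}(x_{n})\text{ is square-free}\} \longrightarrow \prod_{\ell}\Bigl(1 - \frac{\rho_{\underline{a}}(\ell^{2})}{\ell^{2}}\Bigr), \qquad \rho_{\underline{a}}(\ell^{2}) := \#\{x_{n} \bmod \ell^{2} : g_{\underline{a}}(x_{n}) \equiv 0\}.$$
I would then take $B_{n} \to \infty$ first (the innermost limit permitted by the definition of $\mu_{n}$ precisely because $X_{n}$ appears), average the local products over $\underline{a} \in \mathrm{Box}(B_{1},\dots ,B_{n-1})$, and let $B_{1},\dots ,B_{n-1} \to \infty$. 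The decisive bookkeeping is that, as $\underline{a}$ equidistributes modulo each $\ell^{2}$, the Chinese Remainder Theorem makes the average of the product factor into a product of averages, and for each prime $\ell$
$$\frac{1}{\ell^{2(n-1)}}\sum_{\underline{a} \bmod \ell^{2}}\Bigl(1 - \frac{\rho_{\underline{a}}(\ell^{2})}{\ell^{2}}\Bigr) = 1 - \frac{1}{\ell^{2n}}\sum_{\underline{a} \bmod \ell^{2}}\rho_{\underline{a}}(\ell^{2}) = 1 - \frac{c_{\ell}}{\ell^{2n}},$$
since $\sum_{\underline{a}}\rho_{\underline{a}}(\ell^{2})$ counts exactly the $\underline{\alpha} \in (\mathbb{Z}/\ell^{2}\mathbb{Z})^{n}$ with $F(\underline{\alpha}) \equiv 0$. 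This produces the claimed value $\prod_{\ell}(1 - c_{\ell}/\ell^{2n})$ for $\mu_{n}(\mathscr{S}_{F})$.

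The main obstacle is making these interchanges of limit, sum and product rigorous rather than formal, and this is where the $abc$-conjecture does the real work. Granville's error term $o(B_{n})$ must be controlled \emph{uniformly} across the fibers $\underline{a}$ as they grow with $B_{1},\dots ,B_{n-1}$; the $abc$-conjecture supplies exactly this, by bounding (over all admissible fibers simultaneously) the number of $x_{n}$ for which $g_{\underline{a}}(x_{n})$ is divisible by $\ell^{2}$ with $\ell$ large, so that the tail of the sieve is negligible once the truncation parameter tends to infinity. In parallel, the \emph{bad} fibers, where $g_{\underline{a}}$ is constant or fails to be square-free, lie on a hypersurface and so form a density-zero set of $\underline{a}$; their total contribution must be shown to be $o\bigl(\prod_{i<n}B_{i}\bigr)$ using the trivial per-fiber bound together with a uniform square-free-density estimate.

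Finally, for the addendum in which $X_{n}$ does not appear in $F$, I would permute the variables so that some variable that does appear is placed last and sent to infinity first; the maximum over permutations $\sigma$ in the definition of $\mu_{\mathrm{weak}}$ is tailored precisely to license this reordering. The same fibering and recombination argument then applies verbatim, yielding $\mu_{\mathrm{weak}}(\mathscr{S}_{F}) = \prod_{\ell}(1 - c_{\ell}/\ell^{2n})$, with the caveat that only the weak density (and not $\mu_{n}$) is accessible since one can no longer guarantee that the nonappearing variable is the innermost limit.
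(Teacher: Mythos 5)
First, note that the paper does not prove this proposition at all: it is quoted verbatim from Poonen (\cite{poonen}, Theorem 3.2 and Corollary 3.3) and used as a black box, so there is no in-paper argument to compare against. Your sketch does, in fact, follow the broad strategy of Poonen's actual proof --- fiber over $(a_{1},\dots ,a_{n-1})$, apply Granville's conditional univariate theorem to each specialization $g_{\underline{a}}(X_{n})$, and recombine the local densities via the Chinese Remainder Theorem, with the identity $\sum_{\underline{a}}\rho_{\underline{a}}(\ell^{2})=c_{\ell}$ correctly accounting for the Euler factors. The observation that the iterated-limit definition of $\mu_{n}$ (inner limit in $B_{n}$) is exactly what licenses the fibering, and that $\mu_{\rm weak}$ exists to permit a permutation when $X_{n}$ is absent, is also the right reading of the statement.

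However, there are genuine gaps at precisely the points where the theorem has content. (i) Your appeal to ``uniformity of Granville's error term'' is misplaced for the inner limit: since $B_{1},\dots ,B_{n-1}$ are held fixed there, only finitely many fibers occur and Granville can be applied fiber by fiber with no uniformity at all. The real difficulty sits in the outer limit: one must show that the average over $\underline{a}$ of the exact fiber densities $\prod_{\ell}(1-\rho_{\underline{a}}(\ell^{2})/\ell^{2})$ converges to $\prod_{\ell}(1-c_{\ell}/\ell^{2n})$. The truncated upper bound is fine, but for the lower bound one needs $\limsup_{B}\frac{1}{\prod_{i<n}B_{i}}\sum_{\underline{a}}\sum_{\ell>M}\rho_{\underline{a}}(\ell^{2})/\ell^{2}\to 0$ as $M\to\infty$, and the only pointwise bound available for primes $\ell$ dividing the discriminant of $g_{\underline{a}}$ is $\rho_{\underline{a}}(\ell^{2})\ll\ell$, which makes the naive tail estimate diverge; controlling this range (essentially via $c_{\ell}=O(\ell^{2n-2})$ and a separate treatment of primes large relative to the box) is the heart of the matter and is not supplied. (ii) The assertion that the average of the infinite product factors into a product of averages ``by CRT'' is only valid for the truncation at a fixed level $M$; passing from the truncation to the full product is the same tail problem. (iii) For the $\mu_{\rm weak}$ addendum, exhibiting one good permutation only bounds the max from below; since $\overline{\mu}_{\rm weak}$ is a maximum over all permutations, you must also verify that the iterated $\limsup$ for \emph{every} permutation is at most the claimed product (which does follow from the truncated upper bound, but needs to be said). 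As written, the proposal is a correct road map of Poonen's argument with the decisive analytic estimates asserted rather than proved.
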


\begin{rmk}\label{rmk-1}
Note that $f_{q}(X,Y)$ satisfies the hypotheses of Proposition \ref{poonen-lem}. Also, Lemma \ref{lemma-2} asserts that the square of no fixed prime number divides $f_{q}(a,b)$ for all $a,b \in \mathbb{N}$. Therefore, each term in the Euler product $\displaystyle\prod_{\ell}\left(1 - \frac{c_{\ell}}{\ell^{2n}}\right)$ is non-zero for $f_{q}$. Moreover, Poonen also proved in \cite{poonen} that the quantity $c_{\ell}$ in Proposition \ref{poonen-lem} satisfies $c_{\ell} = O(\ell^{2n - 2})$, using techniques from algebraic geometry. Therefore, each term in the Euler product $\displaystyle\prod_{\ell}\left(1 - \frac{c_{\ell}}{\ell^{2n}}\right)$ is $1 + O\left(\frac{1}{\ell^{2}}\right)$ and hence the Euler product for $f_{q}$ converges to a non-zero constant.
\end{rmk}

\section{Proof of Theorem \ref{main-th}}

Recall that for a large positive real number $X$, let $q$ is a prime number with $q \asymp X^{\frac{p - 2}{2p(p - 1)}}$. That is, $c_{1}X^{\frac{p - 2}{2p(p - 1)}} < q < c_{2}X^{\frac{p - 2}{2p(p - 1)}}$ for two suitably chosen positive constants $c_{1}$ and $c_{2}$. For any such prime number $q$, we choose integers $a$ and $b$ with $\frac{1}{2p}q^{\frac{p}{p - 2}} < a,b < (\frac{1}{2p} + \frac{1}{2^{p}p^{p}})q^{\frac{p}{p - 2}}$. Then we see that $f_{q}(a,b) > 0$ and $f_{q}(a,b) \asymp q^{\frac{2p(p - 1)}{p - 2}} \asymp X$. For the sake of convenience, let $$\mathcal{S} := \{(a,b) \in \mathbb{Z}^{2} : c_{1}^{\prime}q^{\frac{p}{p - 2}} < a,b < c_{2}^{\prime}q^{\frac{p}{p - 2}}\} ~\mbox{ and }~ \mathcal{S}(D) := \#\{(a,b) \in \mathcal{S} : D = f_{q}(a,b)\}.$$ Also, let $$\mathcal{S}_{1} := \displaystyle\sum_{D}\mu^{2}(D)\mathcal{S}(D) ~\mbox{ and }~ \mathcal{S}_{2} := \displaystyle\sum_{D}\mu(D)^{2}\mathcal{S}(D)^{2}.$$

We note that $\mathcal{S}_{1}$ counts the number of square-free positive integral values of $f_{q}(a,b)$, where $(a,b) \in \mathcal{S}$. By Lemma \ref{lem-1}, we see that $f_{q}(X,Y)$ satisfies the hypotheses of Proposition \ref{poonen-lem}. Therefore, from Proposition \ref{poonen-lem} and Remark \ref{rmk-1}, we conclude that $\mathcal{S}_{1} \gg q^{\frac{p}{p - 2}}\cdot q^{\frac{p}{p - 2}} = q^{\frac{2p}{p - 2}}$.

\smallskip

Now, we proceed to find an upper bound for $\mathcal{S}_{2}$. From the definition of $\mathcal{S}_{2}$, we see that $\mathcal{S}_{2}$ is the number of square-free positive integers $D$ with $D = f_{q}(a,b)$, counted with multiplicity $\mathcal{S}(D)$. Therefore, $\mathcal{S}_{2}$ is bounded above by the number of quadruples $(a_{1},a_{2},b_{1},b_{2})$ such that $f_{q}(a_{1},b_{1}) = f_{q}(a_{2},b_{2})$. After simplifying, this boils down to 
\begin{equation}\label{last-eq}
4q^{p}(a_{1}^{p} - a_{2}^{p}) = (g(a_{1},b_{1}) + (a_{1} - b_{1})q^{p})^{2} - (g(a_{2},b_{2}) + (a_{2} - b_{2})q^{p})^{2}.
\end{equation}

The right-hand side of \eqref{last-eq} can be factorized as $(u - v)(u + v)$, where $u = (g(a_{1},b_{1}) + (a_{1} - b_{1})q^{p})^{2}$ and $v = (g(a_{1},b_{1}) + (a_{1} - b_{1})q^{p})^{2}$. Now, if $a_{1} = a_{2}$, then for a fixed value of $b_{1}$, equation \eqref{last-eq} reduces to a degree $2(p - 1)$ polynomial in the variable $b_{2}$ and thus there are at most $2(p - 1)$ integral solutions for $b_{2}$. Since $a_{1} = a_{2}$ can be chosen in $O(q^{\frac{p}{p - 2}})$ ways and similarly $b_{1}$ can also be chosen in $O(q^{\frac{p}{p - 2}})$ ways, the number of choices for the quadruple $(a_{1},a_{2},b_{1},b_{2})$ in this case is $O(q^{\frac{p}{p - 2}}\cdot q^{\frac{p}{p - 2}}) = O(q^{\frac{2p}{p - 2}})$.

\smallskip

If $a_{1} \neq a_{2}$, we may assume without loss of any generality that $a_{1} > a_{2}$. For a fixed such tuple $(a_{1},a_{2})$, we find by \eqref{last-eq} that $(u - v)(u + v) \neq 0$. Since $4q^{p}(a_{1}^{p} - a_{2}^{p})$ can be factorized into the product of two integers in $\sigma_{0}(4q^{p}(a_{1}^{p} - a_{2}^{p}))$ ways, using the classical result $\sigma_{0}(N) = O(N^{\varepsilon})$ for any $\varepsilon > 0$, we get that $4q^{p}(a_{1}^{p} - a_{2}^{p})$ can be factorized in $O(q^{\varepsilon})$ ways. For each such factorization and fixed values of $a_{1}, a_{2}, u$ and $v$, we have 
\begin{equation}\label{lasteq-1}
a_{1}^{p - 1} + a_{1}^{p - 2}b_{1} + \ldots + b_{1}^{p - 1} + (a_{1} - b_{1})q^{p} = u 
\end{equation}
and
\begin{equation}\label{lasteq-2}
a_{2}^{p - 1} + a_{2}^{p - 2}b_{2} + \ldots + b_{2}^{p - 1} + (a_{2} - b_{2})q^{p} = v. 
\end{equation}

We see that \eqref{lasteq-1} and \eqref{lasteq-2} are degree $p - 1$ polynomials in the variables $b_{1}$ and $b_{2}$, respectively. Consequently, there are at most $p - 1$ solutions in $b_{1}$ and $b_{2}$ of \eqref{lasteq-1} and \eqref{lasteq-2}, respectively. Thus if $a_{1} \neq a_{2}$, then each $a_{1}$ and $a_{2}$ can be chosen in $O(q^{\frac{p}{p - 2}})$ ways, and corresponding to the tuple $(a_{1},a_{2})$, the integers $u$ and $v$ can be chosen in $O(q^{\varepsilon})$ ways. Therefore, the number of choices for the quadruple $(a_{1},a_{2},b_{1},b_{2})$ is $O(q^{\frac{p}{p - 2}}\cdot q^{\frac{p}{p - 2}}\cdot q^{\varepsilon}) = O(q^{\frac{2p}{p - 2} + \varepsilon})$ ways. Hence $\mathcal{S}_{2} \ll q^{\frac{2p}{p - 2} + \varepsilon}$.

\smallskip

Now, using the Cauchy-Schwarz inequality, we obtain 
\begin{equation}\label{cs}
\left(\displaystyle\sum_{\substack{D \leq X \\ \mathcal{S}(D) > 0}}\mu(D)^{2}\right)\left(\displaystyle\sum_{\substack{D \leq X \\ \mathcal{S}(D) > 0}}\mu(D)^{2}\mathcal{S}(D)^{2}\right) \geq \left(\displaystyle\sum_{\substack{D \leq X \\ \mathcal{S}(D) > 0}}\mu(D)^{2}\mathcal{S}(D)\right)^{2}.
\end{equation}

From inequality \eqref{cs} and the estimates of $\mathcal{S}_{1}$ and $\mathcal{S}_{2}$, it follows that $$\displaystyle\sum_{\substack{D \leq X \\ \mathcal{S}(D) > 0}}\mu(D)^{2} \gg q^{\frac{4p}{p - 2} - \frac{2p}{p - 2} - \varepsilon} \gg q^{\frac{2p}{p - 2} - \varepsilon} \gg X^{\frac{1}{p - 1} - \varepsilon}.$$

This, together with Proposition \ref{propn-1}, completes the proof of Theorem \ref{main-th}. $\hfill\Box$

\smallskip

{\bf Acknowledgements.} It is a pleasure for the first author to thank Indian Institute of Technology, Guwahati for providing excellent facilities to carry out this research. He also thanks the National Board of Higher Mathematics (NBHM) for the Post-Doctoral Fellowship (Order No. 0204/16(12)/2020/R\& D-II/10925). The second author thanks MATRICS, SERB for their research grant MTR/2020/000467.

\end{document}